\def\({\left(}
\def\){\right)}
\def\R {\mathbb{R}}
\newcommand{\be}{\begin{equation} }
\newcommand{\ee}{\end{equation} }
\def \and{\qquad\text{and}\qquad}
\def\Bbb{\mathbb}
\def\R{\Bbb R}
\def \no#1#2#3 {{\bf #1} (#3), #2.}
\def \eds#1#2#3 {#1, #2, #3.}
\newtheorem{proposition}{Proposition}[section]
\newtheorem{theorem}[proposition]{Theorem}
\newtheorem{lemma}[proposition]{Lemma}
\newtheorem{remark}[proposition]{Remark}
\numberwithin{equation}{section}
\title[Blow Up of Solutions to Semilinear Wave Equations ]
{Blow Up of Solutions to Semilinear Non-autonomous Wave Equations Under Robin Boundary Conditions }
\author[]{J. Kalantarova}
\address{ Department of Mathematics, Izmir University of Economics,
Sakarya Caddesi, No:156,
Izmir, Turkey}
\email{jamila.kalantarova@ieu.edu.tr}
\begin{document}
%%%%%%%%%%%%%%%%%%%%%%%%%%%%%%%%%%%%
\begin{abstract}
The problem of blow up of solutions to the initial boundary value
problem for non-autonomous semilinear wave equation with damping and accelerating terms  under the Robin boundary
condition is studied. Sufficient conditions of blow up in a finite time of solutions to semilinear damped wave equations with arbitrary large initial energy are obtained. A result on blow up of solutions with negative initial energy of semilinear second order wave equation with accelerating term is also obtained.
\end{abstract}

\keywords{ Robin boundary condition, blow up of solutions, concavity method
}
%\end{frontmatter}

 \maketitle

\section{Introduction} In this paper we present some results about the global non existence of
solutions of the initial boundary value problem for second order nonlinear wave equations under Robin boundary conditions:
\begin{equation}\label{a1}
 u_{tt} + b u_t =\Delta u +f(u)+h(x,t),\quad  x\in\Omega,\quad    t>0,
\end{equation}
\begin{equation}\label{a2}
\frac{\partial u}{\partial\nu}+\gamma u=0,\qquad
x\in\partial\Omega,\quad t>0,
\end{equation}
\begin{equation} \label{a3}
u(x,0)=u_0(x),\qquad u_t(x,0)=u_1(x), \quad x\in\Omega.
\end{equation}
Here $\Omega\subset \R^n$ is a bounded domain with smooth boundary $\partial \Omega$, $\nu$ is a unit otward vector to the bundary $\partial \Omega$ ,$b\in\mathbb{R} $ and $\gamma \in\R$ are
gien numbers. The given source term $h$, the function $f(\cdot)$ and the initial data $u_0,u_1$ are so smooth that the problem \eqref{a1}-\eqref{a3} has a classical local (in time) solution. We also assume that
\begin{equation}\label{hc}
h_0:=\int_0^\infty\|h(t)\|^2dt<\infty, \quad h_1:=\sup_{t\in \R^+}\|h(t)\| <\infty,
\end{equation}
 and  the
nonlinear term $f\in C^1(\R^1\rightarrow \R^1)$ satisfies the condition
\begin{equation}\label{nnl1}
f(s)s-2(2\alpha+1)F(s)\geq 0, \ \ \forall s\in \R,
\end{equation}
with some $\alpha >0$. Here and below $F(s)=\int_0^s f(\tau)d \tau,$  \ $\|\cdot\|$ and $(\cdot,\cdot)$ denote the norm and inner product in $L^2(\Omega)$ respectively.
For existence of a local classical solution of initial boundary value problems for nonlinear wave equations under boundary conditions covering the Robin boundary conditions, see, e.g., \cite{Weid} and references therein.\\
The purpose of this paper is to show that for some class of initial data the local classical solutions to the problem \eqref{a1}-\eqref{a3} blow up in a finite time.
%and source term

There have been many works devoted to the problems of global non-existence and  blow up of solutions to initial boundary value problems for nonlinear wave equations (see, e.g., \cite{Gla},\cite{Lev1}, \cite{KaLa}, \cite{Kor}, \cite{MiPo},\cite{Str} and references therein).
In majority of these papers sufficient conditions of blow up of solutions in a finite time of initial boundary value problems for various nonlinear wave equations under the homogeneous Dirichlet or Neumann boundary conditions, nonlinear boundary conditions and dynamic boundary conditions  are provided.
 A number of papers were addressed  to the question of blow up of solutions with arbitrary positive initial energy of initial boundary value problems for various  nonlinear wave equations (see, e.g.\cite{Kor} ,\cite{Kut}, \cite{Wu}
 and references therein).

The novelty of results we obtained, compared to preceding results on blow up of solutions of nonlinear wave equations, is that  we obtained results on blow up of solutions for more wide class of non-autonomous equations under the Robin boundary conditions. For weakly damped nonlinear wave equation (when $b>0,$ $\gamma>0$) we show that there are solutions with arbitrary large initial energy that blow up in a finite time. We also obtained sufficient conditions of blow up of solutions of the semilinear wave equation with accelerating term (as far as we know it is a first result of this type for nonlinear wave equation obtained employing energy method).

The main tool we used in the proof of our results is the concavity method and its modification.

In what follows we will employ the following Lemma.
\begin{lemma}\label{Levine} (see \cite{Lev1})
 Let $\Psi (t)$ be a positive,
twice differentiable function, which satisfies, for $t>0,$ the inequality
\begin{equation}\label{levin}
\Psi ^{''}(t)\Psi (t)-(1+\alpha )\left[ \Psi\rq{}(t)\right] ^2\geq 0
\end{equation}
with some $\alpha >0.$ If $\Psi (0)>0$ and $\Psi'(0)>0,$ then there
exists a time $t_0\leq \frac{\Psi (0)}{\alpha \Psi'(0)}$ such that
$\Psi (t)\rightarrow +\infty $ as $t\rightarrow t_0.$
\end{lemma}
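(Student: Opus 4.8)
The plan is to use the concavity transformation that underlies Levine's method. First I would introduce the auxiliary function $\Phi(t):=\Psi(t)^{-\alpha}$, which is well defined and strictly positive for as long as $\Psi$ remains finite and positive. Differentiating twice and collecting terms gives
\[
\Phi''(t) = -\alpha\,\Psi(t)^{-\alpha-2}\left[\Psi(t)\Psi''(t)-(1+\alpha)\left(\Psi'(t)\right)^2\right].
\]
The bracketed quantity is exactly the left-hand side of the hypothesis \eqref{levin}, hence nonnegative, while the prefactor $-\alpha\,\Psi^{-\alpha-2}$ is negative because $\alpha>0$ and $\Psi>0$. Therefore $\Phi''(t)\leq 0$ for $t>0$; that is, $\Phi$ is concave. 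This step, and in particular the choice of the exponent $-\alpha$, is the conceptual heart of the argument: it is precisely this power that converts the nonlinear differential inequality for $\Psi$ into linear concavity information for $\Phi$.

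Next I would record the initial sign data. From $\Psi(0)>0$ we obtain $\Phi(0)=\Psi(0)^{-\alpha}>0$, and from $\Phi'(t)=-\alpha\,\Psi^{-\alpha-1}\Psi'$ together with $\Psi(0)>0$ and $\Psi'(0)>0$ we obtain $\Phi'(0)=-\alpha\,\Psi(0)^{-\alpha-1}\Psi'(0)<0$. Concavity then forces the graph of $\Phi$ to lie beneath its tangent line at the origin,
\[
\Phi(t)\leq \Phi(0)+\Phi'(0)\,t .
\]
Since $\Phi'(0)<0$, the right-hand side is a strictly decreasing affine function vanishing at $t_1=-\Phi(0)/\Phi'(0)=\Psi(0)/\big(\alpha\Psi'(0)\big)$.

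Finally I would combine these facts. As long as $\Psi(t)$ stays finite and positive we have $\Phi(t)>0$; yet the tangent-line majorant forces $\Phi(t_1)\leq 0$. Hence $\Phi$ must already reach the value $0$ at some $t_0\leq t_1=\Psi(0)/\big(\alpha\Psi'(0)\big)$. Because $\Phi=\Psi^{-\alpha}$, the vanishing of $\Phi$ at $t_0$ is equivalent to $\Psi(t)\to+\infty$ as $t\to t_0^-$, which is exactly the asserted blow up together with the stated bound on $t_0$.

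I do not anticipate a serious obstacle, since the whole argument is elementary once the transformation $\Psi^{-\alpha}$ is in hand. The only point demanding a little care is the last step, namely verifying that $\Phi$ genuinely attains the value $0$ in finite time — so that $\Psi$ blows up rather than merely growing without the quantity $\Phi$ ever reaching zero. This is guaranteed precisely because the linear majorant already crosses zero at the finite time $t_1$, which pins the blow-up time into the interval $(0,\,t_1]$.
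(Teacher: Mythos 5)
Your proof is correct: the paper itself does not prove this lemma (it is quoted from Levine's paper \cite{Lev1}), and your argument — showing $\Phi=\Psi^{-\alpha}$ is concave, bounding it by its tangent line at $t=0$, and reading off the blow-up time $t_0\leq \Psi(0)/\bigl(\alpha\Psi'(0)\bigr)$ — is precisely the classical concavity argument on which the cited result rests. Nothing further is needed.
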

\noindent and its modification:
\begin{lemma}\label{lemKaLa} ( \cite{KaLa}) Let twice continuously differentiable
function $\Psi(t)$ satisfy the inequality
\be\label{01} \Psi''(t) \Psi(t)-(1+\alpha)\left[\Psi(t)\right]^2\geq-
2 C_1\Psi(t)\Psi'(t)-C_2\Psi^2(t), \ \ t\ge 0 \ee and
\be\label{01} \Psi(0)>0,
\Psi'(0)>-\gamma_2\alpha^{-1}\Psi(0), \ee where $\alpha >0,
C_1,C_2\geq 0, C_1+C_2>0$ and $\gamma_2=-C_1-\sqrt{C_1^2+\alpha
C_2}$. Then there exists $$t_1 \leq T_1=\left(2\sqrt{C_1^2+\alpha
C_2}\right)^{-1}\ln\frac{\gamma_1\Psi(0)+\alpha\Psi'(0)}{\gamma_2\Psi(0)+\alpha\Psi'(0)},
$$
with $\gamma_1=-C_1+\sqrt{C_1^2+\alpha C_2}$ such that
$$
\Psi(t)\rightarrow \infty \ \ \mbox{as} \ \  t\rightarrow t_1^{-}.
$$
\end{lemma}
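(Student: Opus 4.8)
The plan is to reduce the quadratic differential inequality to a \emph{linear} constant-coefficient inequality by the classical concavity substitution and then integrate it twice in closed form. First I would set $y(t)=\Psi(t)^{-\alpha}$, which is well defined and positive because $\Psi>0$. A direct computation gives $y''=-\alpha\Psi^{-\alpha-2}\left[\Psi''\Psi-(1+\alpha)(\Psi')^2\right]$, so inserting the hypothesis $\Psi''\Psi-(1+\alpha)(\Psi')^2\geq -2C_1\Psi\Psi'-C_2\Psi^2$ (note the multiplier $-\alpha\Psi^{-\alpha-2}$ is negative, which reverses the inequality) and using $\Psi^{-\alpha-1}\Psi'=-y'/\alpha$ and $\Psi^{-\alpha}=y$ turns the statement into
\begin{equation}
y''(t)+2C_1\,y'(t)-\alpha C_2\,y(t)\leq 0,\qquad t\geq 0.
\end{equation}
Since $\Psi\to\infty$ precisely when $y\to 0^+$, the goal becomes to show that this inequality forces $y$ to vanish at some finite $t_1\leq T_1$.

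Next I would factor the operator. The characteristic roots of $\lambda^2+2C_1\lambda-\alpha C_2$ are exactly $\gamma_1=-C_1+\sqrt{C_1^2+\alpha C_2}$ and $\gamma_2=-C_1-\sqrt{C_1^2+\alpha C_2}$, and because $C_1+C_2>0$ we have $\beta:=\gamma_1-\gamma_2=2\sqrt{C_1^2+\alpha C_2}>0$ with $\gamma_1>\gamma_2$. Writing the inequality as $\left(\tfrac{d}{dt}-\gamma_1\right)\left(\tfrac{d}{dt}-\gamma_2\right)y\leq 0$ and setting $v:=y'-\gamma_2 y$, the first integration is immediate: $v'-\gamma_1 v\leq 0$ gives $(e^{-\gamma_1 t}v)'\leq 0$, hence $v(t)\leq v(0)\,e^{\gamma_1 t}$. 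The crucial sign check is that $v(0)<0$. Computing $v(0)=-\Psi(0)^{-\alpha-1}\left[\alpha\Psi'(0)+\gamma_2\Psi(0)\right]$, the hypothesis $\Psi'(0)>-\gamma_2\alpha^{-1}\Psi(0)$ is exactly the statement $\alpha\Psi'(0)+\gamma_2\Psi(0)>0$, so indeed $v(0)<0$, and therefore $v(t)<0$ for as long as $y$ exists.

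For the second integration I would rewrite the bound as $y'-\gamma_2 y\leq v(0)e^{\gamma_1 t}$, multiply by $e^{-\gamma_2 t}$, and integrate from $0$ to $t$ using $\int_0^t e^{\beta s}\dd s=(e^{\beta t}-1)/\beta$, which yields the explicit estimate
\begin{equation}
y(t)\leq\left[y(0)-\frac{v(0)}{\beta}\right]e^{\gamma_2 t}+\frac{v(0)}{\beta}e^{\gamma_1 t}.
\end{equation}
The right-hand side is continuous, equals $y(0)>0$ at $t=0$, and (since $v(0)<0$) solving for its zero reduces to $e^{\beta t}=1-\beta y(0)/v(0)$, whose right member exceeds $1$; this gives a unique finite time $t_1>0$ at which the bound vanishes. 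Because $y(t)\geq 0$ throughout its interval of existence while $y(t)\leq\text{RHS}(t)$, the solution cannot remain positive up to that time, forcing $\Psi\to\infty$ at some $t_1$ no later than it.

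The final bookkeeping is to check that this time is the stated $T_1$. Substituting $y(0)=\Psi(0)^{-\alpha}$ and the formula for $v(0)$ gives $1-\beta y(0)/v(0)=\frac{\alpha\Psi'(0)+(\gamma_2+\beta)\Psi(0)}{\alpha\Psi'(0)+\gamma_2\Psi(0)}$, and since $\gamma_2+\beta=\gamma_1$ the numerator is $\gamma_1\Psi(0)+\alpha\Psi'(0)$; both numerator and denominator are positive (the denominator by the initial hypothesis and the numerator because $\gamma_1>\gamma_2$ and $\Psi(0)>0$), so the logarithm is defined and positive and we recover $T_1=\left(2\sqrt{C_1^2+\alpha C_2}\right)^{-1}\ln\frac{\gamma_1\Psi(0)+\alpha\Psi'(0)}{\gamma_2\Psi(0)+\alpha\Psi'(0)}$. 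I expect the main obstacle to be not any single estimate but the careful sign analysis: establishing $v(0)<0$ from the initial condition, and arguing rigorously that the vanishing of the \emph{auxiliary} upper bound forces genuine blow-up of $\Psi$ rather than merely of the majorant, together with the algebra identifying the blow-up time with the closed-form $T_1$.
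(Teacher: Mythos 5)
Your proof is correct: the substitution $y=\Psi^{-\alpha}$, the factorization of $y''+2C_1y'-\alpha C_2y\leq 0$ through the roots $\gamma_1,\gamma_2$, the sign check $v(0)<0$ from the initial condition, and the algebra identifying the vanishing time of the majorant with $T_1$ all check out. The paper itself gives no proof of this lemma (it only cites \cite{KaLa}), but your argument is essentially the classical concavity-method proof behind that citation, so there is nothing to flag.
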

%\begin{lemma}\label{Korpusov} (see \cite{Kor})
%Suppose $\Psi(t)\in C^2[0,T]$, satisfies inequality
%\begin{equation}\label{2.1}
%\Psi\Psi''-(1+\alpha)(\Psi')^2+C_1\Psi'\Psi\geq -C_2\Psi,
%\quad\alpha>0,\;C_1\geq  0,\;C-2\geq  0,
%\end{equation} and
%\begin{gather}\label{2.18}
%\Psi'(0)>\frac{\kappa}{\alpha}\Psi(0), \  \big(\Psi'(0)-\frac{\kappa}{\alpha}\Psi(0)\big)^2
%>\frac{2\beta}{2\alpha+1}\Psi(0),
%\end{gather}
%where $\Psi(t)\geq  0$, $\Psi(0)>0$. Then there exists
%$T_{0}<\infty$ such that  $$\limsup_{t\rightarrow T_0^-}\Psi(t)=+\infty.$$
%\end{lemma}
\noindent We will also use the Poincar\'e inequality
\begin{equation}\label{Pnk}
\int_{\Omega} u^2 dx\leq d_0\left(\int_{\partial\Omega}u^2d\sigma+\int_{\Omega}|\nabla u|^2 dx\right)
\end{equation}
and the following
inequality
\begin{equation}\label{Pnk1}
\int_{\partial \Omega}u^2d\sigma \leq \epsilon\|\nabla
u\|^2+C(\epsilon)\|u\|^2,
\end{equation}
where $\Omega\subset \R^n$ is a bounded domain with the boundary $\partial \Omega$, $\epsilon>0$ can be
chosen small enough, and $C(\epsilon)>0$ depends on $\epsilon$ (see,
e.g.\cite{Lad}, page 34).

\section{Damped semilinear wave equation under the Robin boundary condition}
In this section we will find sufficient conditions of global nonexistence of
solutions to the problem \eqref{a1}-\eqref{a3} with $b>0, \gamma >0$ under some
restrictions on initial functions.

The main result obtained in this section is the following theorem.
\begin{theorem}\label{T1}
Suppose that $u$ is a local solution of the problem \eqref{a1}-\eqref{a3} and one of the following conditions is satisfied
\be\label{con1}
(u_0,u_1)>\frac{b}{\alpha}\|u_0\|^2 \ \mbox{ if} \ K_0 =4(1+2\alpha)E(0)+A_0 \le0, \ee
or
\be\label{con2}
2(u_0,u_1)>\alpha^{-1}\left(\frac b2+\sqrt{\frac{b^2}4+\alpha}\right)(\|u_0\|^2+K_0),\ \mbox{if} \ K_0>0,
\ee
%or \be\label{con3} b< 0, \gamma>0, \ \mbox{ and} \ \beta=4(1+2\alpha)E(0)+A_0<0,\ee
%, where $\alpha\geq \frac{1}{2}$ is a given number and $d_0$ is defined in \eqref{cond1}.
where $$E(0)=\frac12\|u_1\|^2+\frac12\|\nabla u_0\|^2 +\frac{\gamma}{2}\int_{\partial\Omega}u_0^2d\sigma -(F(u_0),1),$$  \be\label{A0}A_0=
\frac {1+2\alpha}{b}h_0+\frac{d_0h^2_1}{4\alpha \min\{1,\gamma\}}.
\ee
Then there exists $t_1<\infty$ such that
$$\lim_{t\rightarrow t_1 ^{-}}\|u(t)\|=\infty.$$
\end{theorem}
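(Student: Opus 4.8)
The plan is to apply the modified concavity inequality of Lemma~\ref{lemKaLa} to the functional $\Psi(t)=\|u(t)\|^2+\kappa$, where $\kappa=0$ when $K_0\le 0$ and $\kappa=K_0$ when $K_0>0$. First I would compute $\Psi'(t)=2(u,u_t)$ and $\Psi''(t)=2\|u_t\|^2+2(u,u_{tt})$, substitute the equation \eqref{a1}, and integrate by parts in the Laplacian term; the Robin condition \eqref{a2} turns the boundary contribution into $-\gamma\int_{\partial\Omega}u^2\,d\sigma$, so that
\begin{equation*}
\Psi''(t)=2\|u_t\|^2-2\|\nabla u\|^2-2\gamma\int_{\partial\Omega}u^2\,d\sigma+2(u,f(u))+2(u,h)-b\Psi'(t).
\end{equation*}
In parallel I would pair \eqref{a1} with $u_t$ to obtain the energy identity $E'(t)=-b\|u_t\|^2+(u_t,h)$; since $b>0$, Young's inequality $(u_t,h)\le b\|u_t\|^2+\tfrac1{4b}\|h\|^2$ gives $E'(t)\le\tfrac1{4b}\|h\|^2$ and hence the uniform bound $E(t)\le E(0)+\tfrac1{4b}h_0$ by \eqref{hc}.

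The core of the proof is the Cauchy--Schwarz manipulation underlying the concavity method. Because $\kappa\ge0$ we have $[\Psi'(t)]^2=4(u,u_t)^2\le4\|u\|^2\|u_t\|^2\le4\Psi(t)\|u_t\|^2$, and the elementary splitting $2\Psi\|u_t\|^2-4(1+\alpha)(u,u_t)^2\ge-2(1+2\alpha)\Psi\|u_t\|^2$ yields
\begin{equation*}
\Psi''(t)\Psi(t)-(1+\alpha)[\Psi'(t)]^2\ge\Psi(t)\bigl[\,2(u,u_{tt})-2(1+2\alpha)\|u_t\|^2\,\bigr].
\end{equation*}
Into the bracket I would insert the formula for $2(u,u_{tt})$ read off from $\Psi''$, invoke the structural hypothesis \eqref{nnl1} as $(u,f(u))\ge2(2\alpha+1)(F(u),1)$, and eliminate $(F(u),1)$ via the definition of $E(t)$. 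The $\|u_t\|^2$ terms cancel identically, leaving
\begin{equation*}
2(u,u_{tt})-2(1+2\alpha)\|u_t\|^2\ge 4\alpha\Bigl(\|\nabla u\|^2+\gamma\int_{\partial\Omega}u^2\,d\sigma\Bigr)-4(1+2\alpha)E(t)+2(u,h)-b\Psi'(t).
\end{equation*}

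It then remains to dispose of the non-autonomous data. I would bound $-4(1+2\alpha)E(t)\ge-4(1+2\alpha)E(0)-\tfrac{1+2\alpha}{b}h_0$ from the energy estimate, and control the forcing against the coercive gradient/boundary term: by the Poincar\'e inequality \eqref{Pnk}, $\|\nabla u\|^2+\gamma\int_{\partial\Omega}u^2\,d\sigma\ge \min\{1,\gamma\}d_0^{-1}\|u\|^2$, while Young's inequality gives $2(u,h)\ge-\tfrac{4\alpha\min\{1,\gamma\}}{d_0}\|u\|^2-\tfrac{d_0 h_1^2}{4\alpha\min\{1,\gamma\}}$. These two $\|u\|^2$-terms cancel, and what survives is exactly the constant $A_0$ of \eqref{A0}; collecting terms produces the differential inequality
\begin{equation*}
\Psi''(t)\Psi(t)-(1+\alpha)[\Psi'(t)]^2\ge-b\,\Psi(t)\Psi'(t)-K_0\,\Psi(t).
\end{equation*}
Now I would match this with Lemma~\ref{lemKaLa} taking $C_1=b/2$. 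If $K_0\le0$ then $\kappa=0$ and $-K_0\Psi\ge0$, so I take $C_2=0$; then $\gamma_2=-b$ and the admissibility condition $\Psi'(0)>b\alpha^{-1}\Psi(0)$ is ensured by \eqref{con1}. If $K_0>0$ then $\kappa=K_0$, whence $\Psi\ge K_0$ and $-K_0\Psi\ge-\Psi^2$, so I take $C_2=1$; then $\gamma_2=-\tfrac b2-\sqrt{\tfrac{b^2}4+\alpha}$ and the condition $\Psi'(0)>-\gamma_2\alpha^{-1}\Psi(0)$ is precisely \eqref{con2}. Since $\Psi(0)>0$ in both cases, the lemma furnishes a finite $t_1$ with $\Psi(t)\to\infty$, i.e. $\|u(t)\|\to\infty$, as $t\to t_1^-$.

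I expect the main obstacle to be the exact bookkeeping of constants so that the residual non-autonomous terms collapse to $A_0$: one must tune the Young constant in the energy estimate (to $\tfrac1{4b}$) and the Young constant in the forcing estimate (to the Poincar\'e ratio $\min\{1,\gamma\}d_0^{-1}$) so that the coercive gradient term cancels $2(u,h)$ and leaves precisely \eqref{A0}. The second conceptual point, crucial in the positive-energy case, is to recognize that shifting the functional by $\kappa=K_0$ is exactly what upgrades the linear remainder $-K_0\Psi$ to the quadratic $-\Psi^2$ demanded by Lemma~\ref{lemKaLa}, since $\Psi\ge K_0$ by construction.
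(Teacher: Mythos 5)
Your proposal is correct and follows essentially the same route as the paper's proof: the concavity functional $\Psi(t)=\|u(t)\|^2+c_0$ with $c_0=0$ or $c_0=K_0$, the energy identity combined with the structural condition \eqref{nnl1}, Young's inequality and the Poincar\'e inequality \eqref{Pnk} tuned exactly so that the forcing terms collapse to $A_0$, and finally Lemma \ref{lemKaLa} with $C_1=b/2$ and $C_2=0$ or $C_2=1$. The only differences are organizational (you bound $E(t)$ uniformly before forming the concavity combination, whereas the paper substitutes the integrated energy identity into the inequality for $\Psi''+b\Psi'$ first), and your treatment of the $K_0\le 0$ case is in fact slightly more explicit than the paper's.
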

\begin{proof}
Taking scalar product of \eqref{a1}  (in $L^2(\Omega)$) with $u_t$ we obtain the
energy equality:
\begin{equation}\label{Aa1}
\frac{d}{dt}E(t)+b \|u_t(t)\|^2=(u_t(t),h(t)),
\end{equation}
where
\begin{equation}\label{En2}
E(t)=\frac{1}{2}\|u_t(t)\|^2+\frac{1}{2}\|\nabla
u(t)\|^2+\frac{\gamma}{2}\int_{\partial\Omega}u^2(x,t)
d\sigma-(F(u(t)),1).
\end{equation}
%\begin{equation}\label{Ee1}
%\frac{d}{dt}E(t)\leq\frac{1}{4b}\|h\|^2.
%\end{equation}
Integrating \eqref{Aa1} over the interval $(0,t)$ we get:
\begin{equation}\label{En1}
E(t)=E(0)-b\int_0^t \|u_\tau (\tau)\|^2 d\tau+\int_0^t (u_\tau (\tau),h(\tau))d\tau.
\end{equation}

Set $$\Psi(t)= \|u(t)\|^2+c_0,$$ where $u$ is a solution of the problem \eqref{a1}-\eqref{a3} and $c_0$ is a non-negative parameter which will be determined. Employing equation \eqref{a1} and the boundary condition \eqref{a2} we obtain
\begin{equation}\label{Aa2}
\Psi''(t)=2\|u_t\|^2-2\|\nabla u\|^2-2\gamma \int_{\partial\Omega}u^2d\sigma-2b(u,u_t)+2(f(u),u)+2(u,h).
\end{equation}
Since $2(u(t),u_t(t))=\Psi'(t)$, by using the energy equality  \eqref{En1}  we have:
\begin{multline*}
\begin{split}
\Psi''(t)&+b\Psi'(t)\geq  2\|u_t\|^2-2\|\nabla u\|^2-2\gamma \int_{\partial\Omega}u^2d\sigma+4(1+2\alpha)(F(u),1)
\\&+2(u,h)=4(1+2\alpha)\left[-\frac12\|u_t\|^2-\frac12\|\nabla u\|^2-\frac\gamma2\int_{\partial\Omega}u^2d\sigma+(F(u),1)\right]
\\&+2(h,u)+4(1+\alpha)\|u_t\|^2+4\alpha\|\nabla u\|^2+4\alpha\gamma\int_{\partial\Omega}u^2d\sigma.
\end{split}
\end{multline*}
Substituting  the value of $E(t)$ from \eqref{En1} into  the right hand side of  the last inequality we get
\begin{multline}\label{psi2a}
\begin{split}
\Psi''(t)&+b\Psi'(t)\geq-4(1+2\alpha)E(0)+4(1+2\alpha)b\int_0^t \|u_\tau (\tau)\|^2 d\tau++2(h,u)\\&-4(1+2\alpha)\int_0^t(u_\tau,h)d\tau
+4(1+\alpha)\|u_t\|^2+4\alpha\|\nabla u\|^2+4\alpha\gamma\int_{\partial\Omega}u^2d\sigma.
\end{split}
\end{multline}
%\begin{equation}
%\Psi''(t)+b\Psi'(t)\geq -4(1+2\alpha)E(t)+2(h,u).
%\end{equation}
Thanks to the Young's inequality and  the Poincar\'e inequality   \eqref{Pnk} we have
\be\label{psi2b}
\Big|\int_0^t(u_\tau(\tau),h(\tau))d\tau\Big|\le b\int_0^t\|u_\tau(\tau)\|^2d\tau+\frac{1}{4b}
\int_0^t\|h(\tau)\|^2d\tau
\ee
and
\be\label{psi2c}
2\|h\|\|u\|\leq\epsilon\|u\|^2+\frac{1}{\epsilon}\|h\|^2\leq\epsilon d_0\left(\|\nabla u\|^2+\int_{\partial\Omega}u^2d\sigma\right)+\frac{1}{\epsilon}\|h\|^2.
\ee
Employing \eqref{psi2b} in \eqref{psi2a} we get
\begin{multline*}\label{eqn6}
\Psi''(t)+b\Psi'(t)\geq -4(1+2\alpha)E(0)-\frac{1+2\alpha}{b}\int_{0}^{t}\|h(\tau)\|^2d\tau+4(1+\alpha)\|u_t\|^2
\\+4\alpha\|\nabla u(t)\|^2
 +4\alpha\gamma\int_{\partial\Omega}u^2d\sigma-2\|h(t)\|\|u(t)\|.
\end{multline*}
By using  on the right hand side of the last inequality the inequality \eqref{psi2c}  with \\ $\epsilon =4\alpha \min\{1,\gamma\}d_0^{-1}$ we obtain
\begin{equation}\label{in1}
\Psi''(t)+b\Psi'(t)\geq -4(1+2\alpha)E(0)-A_0+4(\alpha+1) \|u_t\|^2,
\end{equation}
where $A_0$ is defined in \eqref{A0}.

First consider the case when the initial data satisfy the condition \eqref{con1}. In this case we choose $c_0=0$ and obtain from \eqref{in1} the inequality
$$
\Psi''(t)\Psi(t)-(\alpha+1)\left[\Psi'(t)\right]^2\ge -b\Psi'(t)\Psi(t)+4(\alpha+1) \|u_t\|^2\Psi(t)-(\alpha+1)\left[\Psi'(t)\right]^2.
$$
It remains to note  that due to Schwarz inequality $$4\|u_t\|^2\Psi(t)\geq \left[\Psi'(t)\right]^2,$$ and therefore
$$
\Psi''(t)\Psi(t)-(\alpha+1)\left[\Psi'(t)\right]^2\ge -b\Psi'(t)\Psi(t).
$$

Then Lemma \ref{lemKaLa} guaranties that $\|u(t)\|$ tends to infinity in a finite time.\\
If  the condition \eqref{con2} is satisfied, i.e. $K_0>0.$  we choose $c_0=K_0$ and deduce from \eqref{in1} the inequality
\be\label{last}
\Psi''(t)\Psi(t)-4(\alpha+1)\|u_t\|^2\Psi(t)\geq -b\Psi(t)\Psi'(t)-\left[\Psi(t)\right]^2.
\ee
 Thus the  inequality \eqref{last} implies that $\Psi(t)$ satisfies the inequality \eqref{01} with $C_1=\frac b2$ and $C_2=1$. The conclusion of the Theorem follows in this case from Lemma \ref{lemKaLa}.
\end{proof}
\begin{remark}
Notice that if  the nonlinear term has the form $f(u)=|u|^p u,$ $p>0,$ then we can find infinitely many  initial data with arbitrary positive initial energy for which the corresponding solutions blow up in a finite time. In this case
$F(u)=\frac{1}{p+2}|u|^{p+2}$ and the condition \eqref{nnl1} is satisfied with $\alpha=\frac{p}{4}$.
For sufficiently smooth nonzero $u_0$ and
 $$u_1(x)=\left(\frac{2}{p+2}\|u_0\|_{L^{p+2}}^{p+2}\right)^{1/2} \frac{u_0(x)}{\|u_0\|}$$
the initial energy takes the form:
%$u_1=\lambda u_0$
$$
E(0)=\frac{1}{2}\|\nabla u_0\|^2+\frac{\gamma}{2}\int_{\partial\Omega}u_0 ^2 d\sigma$$
and  the condition (ii)   of Theorem \ref{T1} takes the form

\begin{multline*}
\begin{split}
 (ii) \ 2(u_1,&u_0)=2\left(\frac{2}{p+2}\|u_0\|_{L^{p+2}}^{p+2}\right)^{1/2}\|u_0\|\\&\geq\frac1\alpha\left(\frac{b}{2}+\sqrt{\frac {b^2}4+\alpha}\right)\left[\|u_0\|^2
+2(1+2\alpha)\|\nabla u_0\|^2+2\gamma(1+2\alpha)\int_{\partial \Omega}u_0^2d\sigma+A_0\right].
\end{split}
\end{multline*}
Since $p>0$ we can choose appropriate $u_0$ for which the initial energy is arbitrary positive number and the conditions (i), (ii) are satisfied.
Thus corresponding solutions will exist only on a finite interval.
\end{remark}

% $$\left[2r\|u_0\|^2 -\frac{b}{\alpha}\|u_0\|^2\right]^2>\frac{2d_0}{2\alpha+1}\|u_0\|^2$$
% $$\left(2 r-\frac{b}{\alpha}\right)^2 \|u_0\|^2>
% \left[2(2\alpha+1)r^2
 %\frac{r^2}{2}\|u_0\|^2+\frac{1}{2}\|\nabla u_0\|^2+\frac{\gamma}{2}\int_{\partial\Omega}u_0 ^2 d\sigma-\frac{1}{p+2}\int_{\Omega}|u_0|^{p+2} (x)dx
 %\right]
 %\frac{2(b+2)}{2\alpha+1}
% $$
% $$\left(4r^2-\frac{br}{\alpha} +\frac{b^2}{\alpha^2}\right)>>2(2\alpha+1)r^2, \alpha\leq\frac{1}{2}
% $$
% \left[
 %\frac{1}{2}\|\nabla u_0\|^2+\frac{\gamma}{2}\int_{\partial\Omega}u_0 ^2 d\sigma-\frac{1}{p+2}\int_{\Omega}|u_0|^{p+2} (x)dx
 %\right]\frac{2(b+2)}{2\alpha+1}$$
\section{Blow up of solutions of semilinear non-autonomous wave equations with accelerating term}
   Now we consider the initial boundary value problem for a semilinear wave equation with accelerating term, i.e. the problem \eqref{a1}-\eqref{a3}  when $\gamma \in \mathbb{R}$ and $b<0$. Let us note that when at least one of the numbers $\gamma$ or
$b$ is negative we can not  directly use the concavity method to get sufficient condition for blow up of solutions to the problem \eqref{a1}-\eqref{a3}. Therefore we make the following change of variables:
\begin{equation}\label{change}
u(x,t)=e^{mt}v(x,t),
\end{equation}
where $m$ is some positive parameter to be determined.
%$$
%(mb+m^2)e^{mt}v +(b+2m)e^{mt}v_t +e^{mt}v_{tt}=e^{mt}\Delta v+f(e^{mt}v)+h(x,t).
%$$
Then we obtain the following problem for the function $v(x,t).$\\
%Thus the function $v(x,t)$ defined by \eqref{change} is a solution of the problem:
\begin{equation}\label{z1}
(mb+m^2)v +(b+2m)v_t +v_{tt}=\Delta v
+e^{-mt}f(e^{mt}v)+e^{-mt}h(x,t),
\end{equation}
\begin{equation}\label{z2}
\frac{\partial v}{\partial\nu}+\gamma v=0,\qquad
x\in\partial\Omega,\quad t>0,
\end{equation}
\begin{equation}\label{z3}
v(x,0)=u_0(x),\quad v_t (x,0)= u_1(x) - m u_0(x).
\end{equation}
The main result of this section is the following theorem.
\begin{theorem}\label{BlowUp2}
 Suppose that the condition
\eqref{nnl1} holds, and
\be\label{condV}
4(\alpha+1)E_1(0)-\frac{h_0}{2m\alpha}-
\frac1{2(mb+m^2)\alpha}h_1^2-4(\alpha +1)(b+2m)\|u_0\|^2\geq
0,
\ee
where
$$
E_1(0)=-\frac{mb+m^2}{2} \|u_0\|^2-\frac{1}{2}\|u_1 - mu_0\|^2 -\frac{1}{2}\|\nabla u_0\|^2-\frac{\gamma}{2}\int_{\partial\Omega} u_0 ^2d\sigma+ (F(u_0),1),
$$
 $m$ is a positive
solution of the equation
\begin{equation}\label{mc}
m^2+mb-|\gamma|C(|\gamma|^{-1})=0,
\end{equation}
and $C(|\gamma|^{-1})$ is the constant in the inequality \eqref{Pnk}. Then the corresponding solution of the problem \eqref{a1}-\eqref{a3}
can exist only on a finite interval $[0,T)$.
\end{theorem}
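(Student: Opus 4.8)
The plan is to transplant the concavity argument behind Theorem \ref{T1} to the transformed problem \eqref{z1}--\eqref{z3}, the very purpose of the substitution $u=e^{mt}v$ being to restore the favourable signs destroyed when $b<0$ or $\gamma<0$. I would work with $\Psi(t)=\|v(t)\|^2+c_0$, where $c_0\ge0$ is a parameter fixed at the end, and try to verify the differential inequality of Lemma \ref{lemKaLa} with $C_1=\tfrac{b+2m}{2}$ and $C_2\in\{0,1\}$. Because $m$ is the positive root of \eqref{mc}, one has $b+2m=\sqrt{b^2+4|\gamma|C(|\gamma|^{-1})}>0$ and $mb+m^2=|\gamma|C(|\gamma|^{-1})\ge0$, so $C_1>0$ and every constant produced below is finite; this positivity is what keeps the method alive in the accelerating, indefinite-boundary regime.

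First I would differentiate $\Psi$ twice, substitute \eqref{z1}, and integrate by parts with \eqref{z2}, absorbing the damping via $2(v,v_t)=\Psi'$ to get
\[
\Psi''+(b+2m)\Psi'=2\|v_t\|^2-2\|\nabla v\|^2-2\gamma\int_{\partial\Omega}v^2 d\sigma+2e^{-mt}(f(e^{mt}v),v)+2e^{-mt}(h,v)-2(mb+m^2)\|v\|^2 .
\]
Applying \eqref{nnl1} to $s=e^{mt}v$ gives $2e^{-mt}(f(e^{mt}v),v)\ge4(1+2\alpha)e^{-2mt}(F(e^{mt}v),1)$, and I would recognise $e^{-2mt}(F(e^{mt}v),1)$ as the potential part of
\[
E_1(t)=-\tfrac{mb+m^2}{2}\|v\|^2-\tfrac12\|v_t\|^2-\tfrac12\|\nabla v\|^2-\tfrac\gamma2\int_{\partial\Omega}v^2 d\sigma+e^{-2mt}(F(e^{mt}v),1),
\]
a functional reducing to the stated $E_1(0)$ at $t=0$ since $v(0)=u_0$, $v_t(0)=u_1-mu_0$. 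Re-expressing the right-hand side through $E_1(t)$ leaves $4(1+2\alpha)E_1(t)+4(1+\alpha)\|v_t\|^2+2e^{-mt}(h,v)$ together with the indefinite packet $4\alpha[\|\nabla v\|^2+\gamma\int_{\partial\Omega}v^2 d\sigma+(mb+m^2)\|v\|^2]$.

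The crucial observation is that this packet is nonnegative: for $\gamma\ge0$ it is manifestly so, while for $\gamma<0$ one bounds the boundary integral by \eqref{Pnk1} with $\epsilon=|\gamma|^{-1}$ and uses \eqref{mc} to cancel the gradient and the $\|v\|^2$ terms exactly --- this is the sole role of the particular root $m$. It then remains to prevent $4(1+2\alpha)E_1(t)+2e^{-mt}(h,v)$ from going too negative. Differentiating $E_1$ and using \eqref{z1} again, the linear and one nonlinear cross term telescope, leaving $E_1'(t)=(b+2m)\|v_t\|^2-e^{-mt}(h,v_t)+[me^{-mt}(f(e^{mt}v),v)-2me^{-2mt}(F(e^{mt}v),1)]$, whose bracket is $\ge4m\alpha e^{-2mt}(F(e^{mt}v),1)$ by \eqref{nnl1}. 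Absorbing $-e^{-mt}(h,v_t)$ into $(b+2m)\|v_t\|^2$ by Young's inequality and estimating by \eqref{hc} gives a time-uniform lower bound $E_1(t)\ge E_1(0)-(\text{const})$, while a second Young estimate on the pointwise term $2e^{-mt}(h,v)$ accounts for the remaining forcing. Condition \eqref{condV} is exactly the translation to the initial data of the resulting requirement that $E_1$ stay nonnegative along the flow, the constants $\tfrac{h_0}{2m\alpha}$ and $\tfrac{h_1^2}{2(mb+m^2)\alpha}$ being the outputs of these two Young splittings and the $\|u_0\|^2$ term carrying the initial kinetic contribution.

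With the packet discarded and the energy controlled, the endgame is routine: multiplying by $\Psi>0$, using $4\|v_t\|^2\Psi\ge(\Psi')^2$, and taking $c_0=0$ (or $c_0$ equal to the leftover constant, which converts it into a $-\Psi^2$ term exactly as in Theorem \ref{T1}) yields $\Psi''\Psi-(1+\alpha)(\Psi')^2\ge-(b+2m)\Psi'\Psi-C_2\Psi^2$, the hypothesis of Lemma \ref{lemKaLa} with $C_1=\tfrac{b+2m}{2}$. The lemma then produces a finite $t_1$ with $\Psi(t)\to\infty$, i.e. $\|v(t)\|\to\infty$; since $\|u(t)\|=e^{mt}\|v(t)\|$ and $t_1<\infty$, the solution of \eqref{a1}--\eqref{a3} blows up as $t\to t_1^-$ and can exist only on $[0,t_1)$. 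I expect the real difficulty to lie in the energy step: unlike the damped case of Theorem \ref{T1}, the exponential weights prevent a dissipation identity, so $E_1$ is not monotone, and both the control of the nonlinear contribution to $E_1'$, which rests entirely on \eqref{nnl1}, and the precise Young calibrations needed to reproduce the constants of \eqref{condV} must be carried out with care.
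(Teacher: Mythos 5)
Your skeleton (the substitution $u=e^{mt}v$, the functional $E_1$, condition \eqref{nnl1}, and the exact cancellation of $(mb+m^2)\|v\|^2+\|\nabla v\|^2+\gamma\int_{\partial\Omega}v^2d\sigma$ via \eqref{Pnk1} with $\epsilon=|\gamma|^{-1}$ and \eqref{mc}) is the paper's, but two steps have genuine gaps. The first is the energy bound. Your identity $E_1'(t)=(b+2m)\|v_t\|^2-e^{-mt}(h,v_t)+me^{-mt}(f(e^{mt}v),v)-2me^{-2mt}(F(e^{mt}v),1)$ is correct, but after the Young step you must still handle the leftover term $4m\alpha e^{-2mt}(F(e^{mt}v),1)$, and \eqref{nnl1} does \emph{not} make it nonnegative: $f(s)=-2s$, $F(s)=-s^{2}$ satisfies $f(s)s-2(2\alpha+1)F(s)=4\alpha s^{2}\ge 0$ with $F<0$. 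So your claimed time-uniform bound $E_1(t)\ge E_1(0)-\mathrm{const}$ does not follow; dropping the $F$-term is illegitimate. The paper instead rewrites $4m\alpha e^{-2mt}(F,1)=4m\alpha E_1(t)+2m\alpha\bigl[(mb+m^2)\|v\|^2+\|\nabla v\|^2+\gamma\int_{\partial\Omega}v^2d\sigma\bigr]+2m\alpha\|v_t\|^2$, kills the bracket with \eqref{Pnk1}+\eqref{mc}, spends $2m\alpha\|v_t\|^2$ on the Young estimate for $e^{-mt}(h,v_t)$, and then integrates $E_1'\ge 4m\alpha E_1+(b+2m)\|v_t\|^2-\frac{1}{8m\alpha}e^{-2mt}\|h\|^2$ against the factor $e^{-4m\alpha t}$ (Gronwall), obtaining the exponential lower bound \eqref{ener2}. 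It is \eqref{ener2}, not a uniform bound, to which the constants of \eqref{condV} are calibrated, and \eqref{ener2} also carries the term $(b+2m)\int_0^t\|v_\tau\|^2d\tau$ that the final step needs.

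Second, and decisively, your endgame cannot close under the theorem's hypotheses. With $\Psi=\|v\|^2+c_0$, Lemma \ref{lemKaLa} requires not only the differential inequality but also its initial condition $\Psi'(0)>-\gamma_2\alpha^{-1}\Psi(0)=\alpha^{-1}\bigl(C_1+\sqrt{C_1^2+\alpha C_2}\bigr)\Psi(0)>0$, i.e. $2(u_0,u_1-mu_0)$ must exceed a positive multiple of $\|u_0\|^2+c_0$. Theorem \ref{BlowUp2} imposes no such condition, and it can genuinely fail: $u_1=0$ gives $\Psi'(0)=-2m\|u_0\|^2<0$, yet is compatible with \eqref{condV} (take $h=0$, $f(u)=|u|^pu$ and $u_0$ large). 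Since your differential inequality carries the slack $-(b+2m)\Psi'\Psi-C_2\Psi^2$, it does not force $\Psi''>0$, so there is no self-correcting mechanism and blow-up cannot be concluded. This is precisely why the paper uses the Levine functional $\Psi(t)=\|v(t)\|^2+(b+2m)\int_0^t\|v(\tau)\|^2d\tau+c_0$ with (implicitly) $c_0=(b+2m)\|u_0\|^2$: combining \eqref{ener2} with \eqref{condV} gives $\Psi''\ge 4(\alpha+1)\bigl[\|v_t\|^2+(b+2m)\int_0^t\|v_\tau\|^2d\tau+c_0\bigr]$, the three-term Cauchy--Schwarz inequality then yields the clean concavity inequality $\Psi''\Psi-(1+\alpha)\left[\Psi'\right]^2\ge0$, and Lemma \ref{Levine} applies, with $\Psi''\ge 4(\alpha+1)c_0>0$ guaranteeing that $\Psi'$ becomes positive in finite time whatever its initial sign. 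Your choice of functional discards exactly the two features (the time integral and the matching $c_0$) that allow the proof to go through with \eqref{condV} as the only assumption on the data.
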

\begin{proof}
Taking scalar product of  \eqref{z1} with $v_t$ and by using the equality
$$
\frac{\partial}{\partial t}\left[e^{-2mt}F(e^{mt}v)\right]+2me^{-2mt}F(e^{mt}v)-me^{-mt}f(e^{mt}v)v=e^{-mt}f(e^{mt}v)v_t
$$
we obtain
%\begin{multline*}
%(mb+m^2)\int_{\Omega}v v_t dx +(b+2m)\int_{\Omega}{v_t}^2 dx+\int_{\Omega}v_{tt} v_tdx\\=\int_{\Omega}\Delta v v_t dx +\int_{\Omega}e^{-mt}f(e^{mt}v)v_t dx+\int_{\Omega}e^{-mt}h(x,t)v_t dx. \end{multline*}
%\begin{multline*}
\begin{multline*}
\begin{split}
\frac{mb+m^2}2\frac{d}{dt}\|v\|^2+&(b+2m)\|v_t\|^2 dx
+\frac{1}{2}\frac{d}{dt}\|v_{t}\|^2\\&=-\frac{1}{2}\frac{d}{dt}\|\nabla v\|^2
-\frac{\gamma}{2}\frac{d}{dt}\int_{\partial\Omega}v^2 d\sigma
+\frac{d}{dt}[e^{-2mt}(F(e^{mt}v),1)] \\&+ 2me^{-2mt}(F(e^{mt}v),1)
-me^{-mt}(f(e^{mt}v),v) +e^{-mt}(h,v_t).
\end{split}
\end{multline*}
From the last inequality by using Young's equality
%$$
%\frac{\partial}{\partial t}\left[e^{-2mt}F(e^{mt}v)\right]+2me^{-2mt}F(e^{mt}v)-me^{-mt}f(e^{mt}v)v
%$$
we obtain
\begin{multline}\label{r1}
-\frac{d}{dt}E_1(t)
+(b+2m)\|v_t\|^2 -2m e^{-2mt}(F(e^{mt}v),1)+me^{-mt}(f(e^{-mt}v),v)\\
\leq\varepsilon_1 \|v_t\|^2+\frac{1}{4\varepsilon_1}\|h\|^2 e^{-2mt},
\end{multline}
where
\be\label{E1}
E_1(t):= -\frac{mb +m^2}{2}\|v\|^2
-\frac{1}{2}\|v_t\|^2-\frac{1}{2}\|\nabla v\|^2-\frac{\gamma}{2}
\int_{\partial\Omega}v^2 d\sigma +
e^{-2mt}(F(e^{mt}v),1).
\ee
Thanks to \eqref{nnl1} we have:
$$
e^{-mt}f(e^{mt}v)v = e^{-2mt}f(e^{mt}v)e^{mt}v  \geq 2(2\alpha
+1)e^{-2mt}F(e^{mt}v).
$$
By using this inequality in \eqref{r1}
 we obtain
$$
-\frac{d}{dt}E_1(t)+ (b+2m)\|v_t\|^2
+4\alpha m e^{-2mt}(F(e^{mt}v),1)
\leq \varepsilon_1
\|v_t\|^2+\frac{1}{4\varepsilon_1}\|h\|^2 e^{-2mt}.
$$
We can rewrite the last inequality in the following form
\begin{multline}\label{en1}
%\begin{split}
\frac{d}{dt}E_1(t) \geq 4m\alpha E_1(t)+ 2m\alpha \left[(mb+m^2)\|v\|^2 +\|\nabla v\|^2 +\gamma\int_{\partial\Omega} v^2 d\sigma\right]\\+(-\varepsilon_1+(b+2m)+2m\alpha)\|v_t\|^2
-\frac{1}{4\varepsilon_1}\|h\|^2 e^{-2mt}.
%\end{split}
\end{multline}

Employing the Poincar\'e inequality \eqref{Pnk}
we get from \eqref{en1} the estimate
\begin{multline*}
\begin{split}
-\frac{d}{dt}E_1(t)\geq 4m\alpha E_1(t) &+
(2m\alpha +b+2m -\varepsilon_1)\|v_t\|^2
\\&
+2m\alpha
\left[(mb+m^2)-|\gamma|C(|\gamma|^{-1})\right]\|v\|^2-\frac{1}{4\varepsilon_1}\|h\|^2
e^{-2mt}.
\end{split}
\end{multline*}
Taking in the last inequality $\varepsilon_1=2m\alpha$, and
integrating it we obtain the following estimate from below for
$E_1(t)$.
\begin{multline}\label{ener2}
E_1(t)\geq e^{4m\alpha t}E_1(0)+ (b+2m)e^{4m\alpha t}\int_{0}^{t}
\|v_\tau (\tau)\|^2 e^{-4m\tau}d\tau\\ -\frac{1}{8m\alpha} e^{4m\alpha t}
\int_{0}^{t}\|h(\tau)\|^2 e^{-m(4\alpha+2)\tau}d\tau.
\end{multline}
Let us consider the following function
$$
\Psi(t)= \|v(t)\|^2 + (b+2m)\int_{0}^{t} \|v(\tau)\|^2 d\tau + c_0,
$$
where $v$ is the solution of the problem \eqref{z1}-\eqref{z3} and $c_0$ is a positive
parameter to be chosen later.\\
It is easy to see that
\begin{multline*}
\begin{split}
\Psi'(t)&=2(v(t),v_t(t))+(b+2m)\|v(t)\|^2\\
&=2(v(t),v_t(t))+2(b+2m)\int_{0}^{t}(v(\tau),v_{\tau}(\tau))d\tau
+(b+2m)\|v_0\|^2
\end{split}
\end{multline*}
and
\begin{multline*}
\begin{split}
\Psi''(t)&=2\|v_t(t)\|^2+2 (v(t),v_{tt}(t))+2(b+2m)(v(t),v_{t}(t))\\
&=2\|v_t(t)\|^2+2(v_{tt}(t)+(b+2m)v_t(t), v(t)).
\end{split}
\end{multline*}
Employing here the equation \eqref{z1} and the condition \eqref{nnl1} we obtain
\begin{multline*}
\begin{split}
\Psi''(t)&=2(\Delta v(t)+e^{-mt} f(e^{mt} v(t))+e^{-mt}h -(mb+m^2)v(t), v(t))+2\|v_t(t)\|^2\\&
\geq -2(mb+m^2)\|v(t)\|^2-2\|\nabla v(t)\|^2-2\gamma
\int_{\partial\Omega}v^2d\sigma +
4(2\alpha +1)e^{-2mt}(F(e^{mt}v(t)))
\end{split}\\+2e^{-mt}(h(t),v(t))+2\|v_t(t)\|^2.
\end{multline*}
The last inequality we can rewrite in the form:
\begin{multline}\label{P22}
\Psi''(t)\geq4(2\alpha
+1)E_1(t)+4(mb+m^2)\alpha \|v\|^2\\+4\alpha \|\nabla v\|^2-
4\alpha\gamma\int_{\partial\Omega}v^2 d\sigma+
2e^{-mt}(h,v)
+4(\alpha+1)\|v_t\|^2.
\end{multline}
By using the inequality
$$
2e^{-mt}(h,v)\geq-2(mb+m^2)\alpha
\|v\|^2-e^{-2mt}\frac{1}{2(mb+m^2)\alpha}\|h\|^2
$$
and the notation \eqref{E1} we obtain from \eqref{P22} the estimate
$$
\Psi''(t)\geq 4(2\alpha+1)E_1(t)-e^{-2mt}\frac{1}{2(mb+m^2)\alpha}
\|h\|^2 + 4(\alpha+1)\|v_t\|^2.
$$
From the last inequality due to \eqref{ener2} we have
\begin{multline*}
\Psi''(t)\geq 4(\alpha+1)\left[(b+2m)\int_{0}^{t}\|v_\tau(\tau)\|^2 d\tau+
\|v_t\|^2+c_0\right]\\
+4(\alpha+1)e^{4m\alpha
t}\left[E_1(0)-\frac{1}{2m\alpha}\int_{0}^{t}
e^{-m(4\alpha+2)\tau}\|h(\tau)\|^2 d\tau\right]\\
-4(\alpha+1)c_0-e^{-2m t}\frac{1}{2(mb+m^2)\alpha}\|h(t)\|^2.
\end{multline*}
Thanks to  the condition \eqref{condV} we infer from the last inequality
the following estimate from below for the function  $\Psi''(t)$:
$$
\Psi''(t)\geq 4(\alpha+1)\left[(b+2m)\int_{0}^{t}\|v_\tau(\tau)\|^2 d\tau+
\|v_t(t)\|^2+c_0\right].
$$
Thus employing the Schwarz inequality we get
$$
\Psi''(t)\Psi(t)-(\alpha +1)\left[\Psi'(t)\right]^2\geq 0.
$$
So the statement of the theorem follows from the Lemma \ref{Levine}.
\end{proof}


\begin{thebibliography}{00}
\bibitem{Gla} R. T. Glassey, Blow-up theorems for nonlinear wave equations. Math. Z. 132 (1973), 183-203.
\bibitem{KaLa} V.K. Kalantarov, O.A. Ladyzhenskaya,
 The occurrence of collapse for quasilinear equations of parabolic
and hyperbolic type, J.
Soviet Math. ,10, 53- 70.(1978).
\bibitem{Kor} M. O. Korpusov, Blow-up of the solution of a
nonlinear  system of equations with positive energy, Theorical and
Mathematical Physiscs, {\bf 171} (2012) 725-728
\bibitem{Kut} N. Kutev, N. Kolkovska, M. Dimova, Finite time blow up of the solutions to boussinesq equation with linear restoring force and arbitrary positive energy. Acta Math. Sci. Ser. B Engl. Ed. {\bf 36} (2016)881-890
\bibitem{Lad} O. A. Ladyzhenskaya, {\it The Boundary Value Problems of Mathematical Physics},
Applied Mathematical Sciences 49, Berlin-Heidelberg-New York:
Springer Verlag,1985,  pp. XXX+322.
\bibitem{Las} I. Lasiecka and A. Stahel, The wave equation with semilinearNeumann boundary conditions, Nonlinear Analysis {\bf 15} (1990) 39-58
\bibitem{Las} I. Lasiecka and A. Stahel, The wave equation with semilinearNeumann boundary conditions, Nonlinear Analysis {\bf 15} (1990) 39-58
\bibitem{Lev1} H. A. Levine, Instability and nonexistence of global solutions
to nonlinear wave equations of the form $Pu_{tt}=-Au+F(u),$ Trans.
Am. Math. Soc., {\bf 192} (1974) 1-21.
\bibitem{LePe} H.A.Levine, L.E. Payne, Some nonexistence theorems for initial-boundary value problems with nonlinear boundary constraints. Proc. Amer. Math. Soc. 46 (1974), 277-284.
\bibitem{Str} B. Straughan, {\it Explosive instabilities in Mechanics}, Springer,1998.
\bibitem{MiPo} E. Mitidieri and S.I. Pohozaev, {\it A priori estimates and blow-up of solutions to
nolinear partial differential equations and inequalities}. Proc. Steklov Institute
of Math. 234 (2002), no. 2-3, 1-362

\bibitem{Wang}  Y. Wang,  A sufficient condition for finite time blow up of the nonlinear Klein-Gordon equations with arbitrarily positive initial energy,
 Proc. Amer. Math. Soc. {\bf 136} (2008) 3477-3482.
\bibitem{Weid} P. Weidemaier,  Existence of regular solutions for a quasilinear wave equation with the third boundary condition. Math. Z. {\bf 191} (1986) 449-465.
\bibitem{Wu} Wu, Shun-Tang Blow-up solutions for a nonlinear wave equation with porous acoustic boundary conditions. Electron. J. Differential Equations 2013, No. 20, 7 pp.
\bibitem{XuYunhua} R. Xu,  Y. Ding, Global solutions and finite time blow up for damped Klein-Gordon Equation, Acta Mathematica Scientia, {\bf 33B(3)} (2013) 643-652.
\end{thebibliography}
\end{document}